\documentclass[a4paper]{amsart}
\date{August 9, 2019}
\usepackage[dvipdfmx]{graphicx}
\usepackage{latexsym}
\usepackage{amssymb}
\usepackage{amsmath}
\usepackage{amscd}
\usepackage{amsthm}
\usepackage{amsfonts}
\usepackage{mathrsfs}
\usepackage{enumerate}
\usepackage{enumitem}
\usepackage{bm}
\usepackage[usenames]{color}
\usepackage[active]{srcltx}
%%%%%%%%%%%%%%%%%%%%%%%%%%%%%%%%%%
\title[Behavior of torsion functions]{%
Behavior of torsion functions 
of spacelike curves in Lorentz-Minkowski space}
\author[A.~Honda]{Atsufumi Honda}
\address{%
   Department of Applied Mathematics, 
   Faculty of Engineering, Yokohama National University, %\endgraf
   79-5 Tokiwadai, Hodogaya, Yokohama 240-8501, Japan
}
\email{honda-atsufumi-kp@ynu.ac.jp}
\thanks{This work was supported by 
JSPS KAKENHI Grant Number 19K14526.}
%%%%%%%%%%%%%%%%%%%%%%%%%%%%%%%%%%
\subjclass[2010]{%
Primary 53B30; %Lorentz metrics, indefinite metrics
Secondary 57R45, %Singularities of differentiable mappings
53A35. %Non-Euclidean differential geometry
}
\keywords{%
Lorentz-Minkowski space, 
spacelike curve,
curvature vector field,
lightlike normal vector,
type change}
%%%%%%%%%%%%%%%%%%%%%%%%%%%%%%%%%%
% Theorem-like Environments
%%%%%%%%%%%%%%%%%%%%%%%%%%%%%%%%%%
\usepackage{amsthm}
\theoremstyle{plain}
 \newtheorem{theorem}{Theorem}[section]

 \newtheorem{proposition}[theorem]{Proposition}
 \newtheorem{fact}[theorem]{Fact}
 \newtheorem*{fact*}{Fact}
 \newtheorem{lemma}[theorem]{Lemma}
 \newtheorem{corollary}[theorem]{Corollary}
 \theoremstyle{remark}
 \newtheorem{definition}[theorem]{Definition}
 
 \newtheorem*{acknowledgements}{Acknowledgements}
 \newtheorem{example}[theorem]{Example}
\numberwithin{equation}{section}

%%%%%%%%%%%%%%%
\pagestyle{plain}
 \makeatletter
    
    \@addtoreset{equation}{section}
  \makeatother
%%%%%%%%%%%%%%%%%%%%%%%%%%%%%%%%%%

\newcommand{\Z}{\mathbb{Z}}
\newcommand{\R}{\mathbb{R}}
\newcommand{\SO}{\operatorname{SO}}
\renewcommand{\O}{\operatorname{O}}
\renewcommand{\L}{\mathbb{L}}
%%%%%%%%%%%%%%%%%%%%%%%%%%%%%%%%%%
\newcommand{\vect}[1]{\boldsymbol{#1}}

\newcommand{\inner}[2]{\left\langle{#1},{#2}\right\rangle}

%%%%%%%%%%

%%
%%

\begin{document}
\begin{abstract}
In this paper, 
we introduce the \emph{pseudo-torsion functions}
along spacelike curves whose curvature vector field 
has isolated lightlike points in Lorentz-Minkowski $3$-space,
and prove the fundamental theorem.
Moreover, 
we analyze the behavior of the torsion function at such points.
As a corollary,
we obtain a necessary and sufficient condition
for real analytic spacelike curves to be planar.
\end{abstract}
\maketitle

%%%%%%%%%%%%%%%%%%%%%%%%%%%%%%%%%%
%%%%%%%%%%%%%%%%%%%%%%%%%%%%%%%%%%
%%%%%%%%%%%%%%%%%%%%%%%%%%%%%%%%%%
%%%%%%%%%%%%%%%%%%%%%%%%%%%%%%%%%%
\section{Introduction}

We denote by $\L^3$ the Lorentz-Minkowski $3$-space
equipped with the Lorentzian inner product $\inner{~}{~}$.
Consider a spacelike regular curve
$\gamma : I \rightarrow \L^3$,
where $I$ is an open interval.
Without loss of generality,
we may assume that
$\gamma$ is parametrized by arclength,
that is, $\inner{\gamma'(s)}{\gamma'(s)}=1$ holds for all $s\in I$,
where the prime means $d/ds$.
Then, we call 
$$
  \vect{\kappa}(s):=\gamma''(s)
$$
the {\it curvature vector field} 
along $\gamma(s)$.
Although the tangent vector $\gamma'(s)$ is spacelike,
$\vect{\kappa}(s)$
may be spacelike, timelike, or lightlike.
We list the known results concerning curves in $\L^3$
depending on the causal characters of $\vect{\kappa}(s)$
as follows:
\begin{itemize}
\item
In the case that $\vect{\kappa}$ is not lightlike on $I$,
the curve $\gamma(s)$ is said to be a {\it spacelike Frenet curve}.
The curvature and torsion functions 
can be defined as in the case of Euclidean $3$-space.
Then, the Frenet-Serret type formula is obtained,
which yields
the fundamental theorem for such spacelike curves
\cite{Walrave, Lopez} 
(cf.\ \S \ref{sec:type-Frenet}).
However, if $\vect{\kappa}$ admits a lightlike point,
such a procedure cannot be proceeded.
\item
In the case that $\vect{\kappa}(s)$ is lightlike 
for each $s\in I$,
although the Frenet frame cannot be defined 
in a similar manner,
one can define another frame
which satisfies the Frenet-Serret type formula.
The fundamental theorem for 
such spacelike curves also holds.
For more details, see
\cite{Walrave, Inoguchi, Lopez}
(cf.\ \S \ref{sec:type-L}).
\item
With respect to timelike or lightlike curves,
for a timelike curve $\gamma : I\to \L^3$,
one can consider the curvature vector field $\vect{\kappa}$ 
in a similar way.
Since $\vect{\kappa}$ is always spacelike,
the curvature and torsion functions 
can be defined in the usual way 
\cite{Walrave, Lopez}.
Also, for a non-degenerate lightlike curve,
a suitable frame can be defined
\cite{Bonnor, Graves, FGL, InoguchiLee, Lopez}
(see also \cite{Vessiot, Walrave, Olszak, Akamine}).
In \cite{FP}, another treatment of the curvatures of curves 
in semi-Euclidean space is given.
Global properties of spacelike curves in $\L^3$
are investigated in \cite{IKT}.
\end{itemize}
Hence,
we may say that
the previous studies have not clarified the structure
of spacelike curves 
whose curvature vector field %$\vect{\kappa}(s)$ 
has isolated lightlike points.

In this paper, we deal with 
spacelike curves whose
curvature vector field $\vect{\kappa}(s)$
has isolated lightlike points.
We introduce a torsion-like invariant
called the {\it pseudo-torsion function},
which yields the fundamental theorem 
for such spacelike curves
(Theorem \ref{thm:fundamental}, Corollary \ref{cor:fundamental}).
An application of such the fundamental theorem
for mixed type surfaces in $\L^3$
can be found in \cite{Honda}.

This paper is organized as follows.
In Section \ref{sec:prelim},
we review the several formulas of vectors
and curves in $\L^3$.
In Section \ref{sec:Lk},
we deal with spacelike curves 
having points where curvature vector field 
$\vect{\kappa}(s)$ is lightlike.
At such points,
the torsion function is unbounded, in general.
We obtain the coefficient of the divergent term 
of the torsion function (Theorem \ref{thm:tau-Lk})
for spacelike curves of {\it type $L_k$} 
(cf.\ Definition \ref{def:type-Lk}).
As a corollary,
we obtain a necessary and sufficient condition for 
real analytic spacelike curves with non-zero curvature vector 
to be planar (Corollary \ref{cor:planarity}).

%%%%%%%%%%%%%%%%%%%%%%%%%%%%%%%%%%
%%%%%%%%%%%%%%%%%%%%%%%%%%%%%%%%%%
%%%%%%%%%%%%%%%%%%%%%%%%%%%%%%%%%%
%%%%%%%%%%%%%%%%%%%%%%%%%%%%%%%%%%
\section{Preliminaries}
\label{sec:prelim}

We denote by $\L^3$ the Lorentz-Minkowski $3$-space
with the standard Lorentz metric $\inner{~}{~}$.
Namely,
$$
  \inner{\vect{x}}{\vect{x}} 
  =  x^2 + y^2 - z^2
$$
holds for each $\vect{x}=(x,y,z)^T\in \L^3$,
where $\vect{x}^T$ stands for the transpose of 
the column vector $\vect{x}$.
A vector $\vect{x}\in \L^3$
is called {\it spacelike\/}
if $\inner{\vect{x}}{\vect{x}}>0$ or $\vect{x} = \vect{0}$.
Similarly, 
if $\inner{\vect{x}}{\vect{x}}<0$
(resp.\ $\inner{\vect{x}}{\vect{x}}=0$),
$\vect{x}$ is called {\it timelike\/} 
(resp.\ {\it lightlike\/}).
For $\vect{x}\in \L^3$, we set
$|\vect{x}|:= \sqrt{|\inner{\vect{x}}{\vect{x}}|}$.

For vectors $\vect{v}, \vect{w} \in \L^3$,
the vector product $\vect{v}\times \vect{w}$
is given by
$\vect{v}\times \vect{w}:=Z \vect{v}\times_{\rm e} \vect{w}$,
where $\times_{\rm e}$ means 
the standard cross product of 
the Euclidean $3$-space $\R^3$,
and we set $Z:={\rm diag}(1,1,-1)$.
Then, it holds that
\begin{align}
\label{eq:scalar-triplet}
\det(\vect{u},\vect{v},\vect{w})&=\inner{\vect{u}}{\vect{v}\times \vect{w}},\\
\label{eq:vector-triplet}
\vect{u}\times (\vect{v}\times \vect{w})
&=\inner{\vect{u}}{\vect{v}}\vect{w}-\inner{\vect{u}}{\vect{w}}\vect{v},\\
\label{eq:area-formula}
\inner{ \vect{v}\times \vect{w} }{ \vect{v}\times \vect{w} }
  &= - \inner{\vect{v}}{\vect{v}} \inner{\vect{w}}{\vect{w}}
     + \inner{\vect{v}}{\vect{w}}^2
\end{align}
for $\vect{u},\vect{v},\vect{w}\in \L^3$.
In particular, 
$\vect{v}\times \vect{w}$ is orthogonal to 
$\vect{v}$ and $\vect{w}$.
To calculate the vector product of a lightlike vector,
the following formula is useful.

\begin{fact}[{\cite[Lemma 4.3]{HST}}]
\label{fact:gaiseki}
Let $\vect{v}\in \L^3$ be a spacelike vector.
Take a lightlike vector $\vect{w}\in \L^3$
such that $\inner{\vect{v}}{\vect{w}}=0$.
Then, either
$\vect{v} \times \vect{w} = |\vect{v}| \, \vect{w}$
or $\vect{v} \times \vect{w} = -|\vect{v}| \, \vect{w}$
holds.
\end{fact}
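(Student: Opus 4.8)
The plan is to show that $\vect{v}\times\vect{w}$ is a lightlike vector orthogonal to $\vect{w}$, conclude that it must therefore be proportional to $\vect{w}$, and then pin down the proportionality constant as $\pm|\vect{v}|$ by a triple-product computation.

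First I would determine the causal character of $\vect{v}\times\vect{w}$. Substituting the hypotheses $\inner{\vect{w}}{\vect{w}}=0$ and $\inner{\vect{v}}{\vect{w}}=0$ into the area formula \eqref{eq:area-formula} gives $\inner{\vect{v}\times\vect{w}}{\vect{v}\times\vect{w}}=0$, so $\vect{v}\times\vect{w}$ is lightlike. Since $\vect{v}\times\vect{w}$ is also orthogonal to $\vect{w}$ (as noted just after \eqref{eq:area-formula}), I would then invoke the elementary fact from Lorentzian linear algebra that two mutually orthogonal lightlike vectors in $\L^3$ are linearly dependent. This holds because the orthogonal complement of the spacelike vector $\vect{v}$ is a Lorentzian plane, whose null cone consists of exactly two lightlike lines, and two null vectors lying on distinct such lines always have nonzero inner product. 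Consequently $\vect{v}\times\vect{w}=\lambda\vect{w}$ for some $\lambda\in\R$.

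It then remains to evaluate $\lambda$, which I would do by computing $\vect{v}\times(\vect{v}\times\vect{w})$ in two ways. On one hand, the triple-product identity \eqref{eq:vector-triplet} together with $\inner{\vect{v}}{\vect{w}}=0$ yields $\vect{v}\times(\vect{v}\times\vect{w})=\inner{\vect{v}}{\vect{v}}\,\vect{w}=|\vect{v}|^2\vect{w}$. On the other hand, substituting $\vect{v}\times\vect{w}=\lambda\vect{w}$ gives $\vect{v}\times(\vect{v}\times\vect{w})=\lambda\,(\vect{v}\times\vect{w})=\lambda^2\vect{w}$. Comparing the two expressions and using $\vect{w}\neq\vect{0}$ forces $\lambda^2=|\vect{v}|^2$, hence $\lambda=\pm|\vect{v}|$, which is exactly the dichotomy in the statement.

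The only step that is not a direct substitution into the identities \eqref{eq:scalar-triplet}--\eqref{eq:vector-triplet} is the claim that $\vect{v}\times\vect{w}$ and $\vect{w}$ are parallel; this is where the Lorentzian, as opposed to Euclidean, signature is essential, since in the Euclidean case a nonzero cross product is never parallel to one of its factors. I expect this to be the main conceptual point, although it is standard and can be dispatched either by the null-cone argument above or by passing to adapted coordinates in which $\vect{w}=(1,0,1)^T$.
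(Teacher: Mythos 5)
The paper does not prove this statement: it is imported verbatim as a ``Fact'' from \cite[Lemma 4.3]{HST}, so there is no in-paper argument to compare against. Your proposal is a correct, self-contained proof. The three steps all check out: \eqref{eq:area-formula} with $\inner{\vect{v}}{\vect{w}}=\inner{\vect{w}}{\vect{w}}=0$ gives that $\vect{v}\times\vect{w}$ is null; it lies, together with $\vect{w}$, in the Lorentzian plane $\vect{v}^{\perp}$ and is orthogonal to $\vect{w}$, and orthogonal null vectors in a Lorentzian plane are indeed proportional (on distinct null lines the pairing is nonzero); and the double cross product via \eqref{eq:vector-triplet} pins down $\lambda^2=\inner{\vect{v}}{\vect{v}}=|\vect{v}|^2$. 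The degenerate cases are also consistent: if $\vect{v}=\vect{0}$ the claim is trivial, and if $\vect{v}\neq\vect{0}$ then $\vect{v}\times\vect{w}\neq\vect{0}$ since a nonzero spacelike vector cannot be parallel to a nonzero lightlike one, so $\lambda\neq 0$. You correctly identify the proportionality of $\vect{v}\times\vect{w}$ and $\vect{w}$ as the genuinely Lorentzian step; an equally quick alternative is to normalize $\vect{w}=(a,b,c)^T$ with $c\neq 0$ and compute directly, but your coordinate-free route using only the identities already recorded in Section \ref{sec:prelim} is cleaner.
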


The isometry group of $\L^3$
is described as the semidirect product
$
  {\rm Isom}(\L^3)=\O(2,1) \ltimes \L^3,
$
where $\O(2,1)$ is the Lorentz group
which consists of square matrices $A$ of order $3$
such that $A^T Z A = Z$.
We set $\SO(2,1)$ and $\SO^+(2,1)$ as
\begin{align*}
  \SO(2,1)&:=\left\{ A \in \O(2,1)\,;\, \det A =1 \right\},\\
  \SO^+(2,1)&:=\left\{ A=(a_{ij}) \in \SO(2,1)\,;\, a_{33} >0 \right\},
\end{align*}
respectively.
Orientation-preserving isometries form the subgroup 
$\SO(2,1)\ltimes \L^3$ 
of the isometry group ${\rm Isom}(\L^3)$.

%%%%%%%%%%%%%%%%%%%%%%%%%%%%%%%%%%
\subsection{Spacelike curves in $\L^3$}

Let $I$ be an open interval.
A regular curve
$\gamma : I \rightarrow \L^3$
is called {\it spacelike} 
if each tangent vector is spacelike.
By a coordinate change,
we may assume that
$\gamma$ is parametrized by arclength.
That is, 
$\vect{e}(s):=\gamma'(s)$
gives the spacelike tangent vector field of unit length,
where the prime means $d/ds$.
As in the introduction, we call 
$
  \vect{\kappa}(s):=\gamma''(s)
$
the {\it curvature vector field} along $\gamma(s)$.
If $\vect{\kappa}(s)$ is nowhere zero, 
then $\gamma(s)$ is said to be of {\it non-zero curvature vector}.

In a general parametrization $\gamma=\gamma(t)$,
the curvature vector field is written as
$$
  \vect{\kappa}(t)
%  =\frac{|\dot{\gamma}(t)|^2 \ddot{\gamma}(t) 
%   - \inner{\dot{\gamma}(t)}{\ddot{\gamma}(t)}\dot{\gamma}(t)
%   }{| \dot{\gamma}(t)|^4}
   =\frac{\dot{\gamma}(t) \times (\dot{\gamma}(t)\times \ddot{\gamma}(t))
   }{|\dot{\gamma}(t)|^4},
$$
where the dot means $d/dt$.
Here, we used \eqref{eq:vector-triplet}.
So we have the following:
\[
\begin{minipage}{0.8\linewidth}
{\it 
Let $\gamma(t)$ be a spacelike curve in $\L^3$,
which may not be of unit speed.
Then, $\gamma(t)$ has non-zero curvature vector
if and only if 
$\dot{\gamma}(t)$ and $\ddot{\gamma}(t)$
are linearly independent.
}\end{minipage}
\]
%%%%  minipage  %%%%

For a spacelike curve 
with non-zero curvature vector,
we prepare several terminologies as follows:

\begin{definition}\label{def:type-Lk}
Let $\gamma : I \rightarrow \L^3$
be a spacelike curve 
with non-zero curvature vector.
\begin{itemize}
\item
A point $s_0\in I$ is said to be 
a {\it curvature-lightlike point},
if $\vect{\kappa}(s_0)$ is a lightlike vector.
If there exists an open neighborhood 
$J$ of a curvature-lightlike point $s_0$
such that 
$J\setminus\{s_0\}$ consists of 
non-curvature-lightlike points,
then $s_0$ is called 
{\it isolated}.
\item%[(STL)]
The curve $\gamma(s)$ is said to be a {\it spacelike Frenet curve},
if it has no curvature-lightlike points.
In particular, 
if $\vect{\kappa}(s)$ is a spacelike 
(resp.\ timelike) vector field along $\gamma(s)$,
then 
$\gamma(s)$ is said to be of {\it type $S$} 
(resp.\ {\it type $T$}\/).
\item%[(STL)]
If $\vect{\kappa}(s)$ is a lightlike vector field along $\gamma(s)$,
then
$\gamma(s)$ is called a spacelike curve of {\it type $L$}.
\item
The function $\theta(s)$ defined by
$$
\theta(s):=
\inner{\vect{\kappa}(s)}{\vect{\kappa}(s)}
$$
is called the {\it causal curvature function}.
Then, 
$\gamma(s)$ is of type $S$ (resp.\ type $T$, type $L$\/)
if and only if 
%$\theta(s)>0$ (resp.\ $\theta(s)<0,~\theta(s)\equiv0$\/)
$$  \theta(s)>0\quad
  (\text{resp}.~\theta(s)<0,~\theta(s)\equiv0)
$$
holds on $I$.
\item%[(L$_{k}$)]
Letting $k\in \Z$ be a positive integer,
$\gamma(s)$ is said to be 
of {\it type $L_k$} at $s_0\in I$,
if 
$$\theta(s_0) %=\theta'(s_0)
= \cdots= \theta^{(k-1)}(s_0)=0,
\quad
\theta^{(k)}(s_0)\neq0.$$
\end{itemize}
\end{definition}

Let us remark that, 
if $\gamma(s)$ is of type $L_k$ at $s_0\in I$,
then $s_0$ is an isolated curvature-lightlike point.
Moreover, 
if $\gamma:I\to \L^3$ is real analytic,
then the type must be either $S$, $T$, $L$ or $L_k$
(cf.\ Corollary \ref{cor:planarity}).

In the following,
we review the fundamental properties of 
spacelike Frenet curves or 
spacelike curves of type $L$,
as explained in \cite{Lopez}.

%%%%%%%%%%%%%%%%%%%%%%%%%%%%%%%%%%
\subsection{Spacelike Frenet curves}
\label{sec:type-Frenet}

Let $\gamma : I \to \L^3$ 
be a spacelike Frenet curve
parametrized by arclength. 
Denote by 
$\vect{e}(s)=\gamma'(s)$
$\left({\rm resp}.\ \vect{\kappa}(s)=\gamma''(s) \right)$
the unit tangent vector field 
(resp.\ the curvature vector field) 
along $\gamma(s)$.

The curvature function is defined as 
$
\kappa(s):=\sqrt{|\theta(s)|}
%\left(
\, (=|\vect{\kappa}(s)|).
%\right).
$
We set the sign $\sigma_{\gamma}\in \{+1,-1\}$
as
$\sigma_{\gamma}=-1$ (resp.\ $+1$)
if $\gamma$ is of type $S$ (resp.\ type $T$).
Then, 
$$
  \vect{n}(s):=\frac1{\kappa(s)}\vect{\kappa}(s),\quad
  \vect{b}(s):= \sigma_{\gamma} \vect{e}(s) \times \vect{n}(s)
%  , \quad
%  \tau(s):= \inner{\vect{n}'(s)}{\vect{b}(s)}
$$
are respectively
called the principal normal vector field,
and
the binormal vector field,
which satisfy $\det (\vect{e}, \vect{n}, \vect{b}) =1$.
The torsion function is defined as
$\tau(s):= \inner{\vect{n}'(s)}{\vect{b}(s)}$.
The system of the equations,
$\vect{e}' = \kappa \vect{n}$,
$\vect{n}' = \sigma_{\gamma}(\kappa \vect{e} + \tau \vect{b})$,
and
$\vect{b}' = \sigma_{\gamma} \tau \vect{n}$,
is called the Frenet-Serret type formula.
Then, it holds that
\begin{equation}\label{eq:curve-det}
  \det (\gamma' ,\, \gamma'',\, \gamma''') = -\theta \tau.
\end{equation}

%%%%%%%%%%%%%%%%%%%%%%%%%%%%%%%%%%
\subsection{Spacelike curves of type $L$}
\label{sec:type-L}

Let $\gamma : I \to \L^3$ 
be a unit speed spacelike curve of type $L$.
By definition,
$\vect{\kappa}(s)$ is a lightlike vector field.
Then, there exists a lightlike vector field
$\vect{\beta}(s)$ 
such that
$$
%\begin{equation}\label{eq:pseudo-binormal-L}
%  \inner{\vect{\beta}(s)}{\vect{\beta}(s)}= 
  \inner{\vect{e}(s)}{\vect{\beta}(s)} =0,
  \quad
  \inner{\vect{\kappa}(s)}{\vect{\beta}(s)}=1.
%\end{equation}
$$
Such a vector field $\vect{\beta}(s)$ is uniquely determined.
We call $\vect{\beta}(s)$ 
the {\it pseudo-binormal vector field}.
Then, $\mu(s) := - \inner{\vect{\kappa}'(s)}{\vect{\beta}(s)}$ is called 
the {\it pseudo-torsion function}\footnote{%
In \cite{Lopez}, $-\mu(s)$ is called 
the pseudo-torsion function.
However, to maintain the consistency with 
the notations for spacelike curves of type $L_k$,
we use the minus sign.
We also remark that 
the terminology `pseudo-binormal vector field'
is not used in \cite{Lopez}.}.
%The Frenet-Serret type formula is given as
%\begin{equation}\label{eq:Frenet-L}
%  \vect{e}' = \vect{\kappa},\qquad
%  \vect{\kappa}' = -\mu \vect{\kappa},\qquad
%  \vect{\beta}' = - \vect{e} + \mu \vect{\beta}.
%\end{equation}
The system of the equations,
$\vect{e}' = \vect{\kappa}$,
$\vect{\kappa}' = -\mu \vect{\kappa}$,
and
$\vect{\beta}' = - \vect{e} + \mu \vect{\beta}$,
is called the Frenet-Serret type formula.
In terms of the frame 
$\mathcal{F}:=( \vect{e},\,\vect{\kappa},\,\vect{\beta} )$,
the formula
is rewritten as 
%\begin{equation}\label{eq:L-matrix}
$$
  \mathcal{F}'=\mathcal{F}
  \begin{pmatrix}
  0 & 0 & -1\\
  1 & -\mu & 0\\
  0 & 0 & \mu
  \end{pmatrix}.
$$
%\end{equation}

%%%%%%%%%%%%%%%%%%%%%%%%%%%%%%%%%%
%%%%%%%%%%%%%%%%%%%%%%%%%%%%%%%%%%
%%%%%%%%%%%%%%%%%%%%%%%%%%%%%%%%%%
%%%%%%%%%%%%%%%%%%%%%%%%%%%%%%%%%%
\section{Spacelike curves whose curvature vector field
has isolated lightlike points}
\label{sec:Lk}

In this section, we investigate 
spacelike curves having 
isolated curvature-lightlike points.
\if0 %%%%%%%%%%%%%%%%%%%%%
To define a torsion-like invariant,
we first show the existence of a vector field
called the {\it pseudo-binormal vector field}
(Lemma \ref{lem:m-k}).
Then, we define the {\it pseudo-torsion function},
and show the Frenet-Serret type formula
(Proposition \ref{prop:Frenet-Lk}).
The fundamental theorem of spacelike curves of type $L_k$
is also obtained
(Theorem \ref{thm:fundamental}, Corollary \ref{cor:fundamental}).
In addition, we consider the asymptotic behavior of
the torsion function near the point at which  
the curvature vector is lightlike 
(Theorem \ref{thm:tau-Lk}).
As a corollary, we obtain the planarity condition 
for analytic spacelike curves (Corollary \ref{cor:planarity}).
\fi %%%%%%%%%%%%%%%%%%%%%

%%%%%%%%%%%%%%%%%%%%%%%%%%%%%%%%%%
\subsection{Pseudo-torsion function and Frenet-Serret type formula}
Let $\gamma : I \to \L^3$ be a unit speed spacelike curve
with non-zero curvature vector.
Denote by 
$\vect{e}(s)=\gamma'(s)$
$\left({\rm resp}.\ \vect{\kappa}(s)=\gamma''(s) \right)$
the unit tangent vector field 
(resp.\ the curvature vector field) 
along $\gamma(s)$.
The causal curvature function is given by
$\theta(s)=\inner{\vect{\kappa}(s)}{\vect{\kappa}(s)}$.

Let $s_0\in I$ be a curvature-lightlike point,
namely, $\vect{\kappa}(s_0)$ is a lightlike vector.
By Fact \ref{fact:gaiseki},
$$
\vect{e}(s_0)\times\vect{\kappa}(s_0) = \epsilon \vect{\kappa}(s_0)
$$
holds for some $\epsilon\in \{+1,-1\}$.
We call $\epsilon$ the {\it sign}.
When we emphasize $\gamma(s)$
and $s_0$,
we also denote by $\epsilon={\rm sgn}(\gamma,s_0)$.

%Then, $s_0\in I$ is said to be  
%a {\it positive} (resp.\ {\it negative}) {\it curvature-lightlike point}, if
%$\vect{e}(s_0)\times\vect{\kappa}(s_0) = \vect{\kappa}(s_0)  ~
%({\rm resp.}~  \vect{e}(s_0)\times\vect{\kappa}(s_0) = -\vect{\kappa}(s_0) )$
%holds. 
%We remark that
%this positivity depends on the orientation of the curve.

%We have the following:

\begin{lemma}\label{lem:m-k}
Let $\gamma(s)$ be a spacelike curve with non-zero curvature vector,
and $s_0\in I$ be an isolated curvature-lightlike point.
Then, there exists 
a non-vanishing vector field 
$\vect{\xi}(s)$ along $\gamma(s)$ 
defined on a neighborhood of $s_0\in I$
such that
$$
  \vect{e}(s)\times\vect{\kappa}(s) 
  = \epsilon \vect{\kappa}(s) + \theta(s)\,\vect{\beta}(s)
$$
holds.
{\rm (}We call such a vector field $\vect{\beta}(s)$
the \emph{pseudo-binormal vector field} along $\gamma(s)$.{\rm )}
\end{lemma}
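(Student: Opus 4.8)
The plan is to realize $\vect{\beta}$ as a quotient and to show that the apparent singularity at $s_0$ is removable. Write
$$\vect{F}(s):=\vect{e}(s)\times\vect{\kappa}(s)-\epsilon\,\vect{\kappa}(s),$$
a smooth vector field which, by the definition of $\epsilon$, satisfies $\vect{F}(s_0)=\vect{0}$. For $s\neq s_0$ we have $\theta(s)\neq0$ (since $s_0$ is isolated), so the identity in the statement forces $\vect{\beta}:=\vect{F}/\theta$. The whole content of the lemma is therefore that $\vect{F}/\theta$ extends across $s_0$ to a smooth, non-vanishing field.

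To analyze this I would pass to an adapted frame. On a neighborhood of $s_0$ choose a smooth spacelike unit field $\vect{u}(s)$ orthogonal to $\vect{e}(s)$ (the bundle $\vect{e}^{\perp}$ is a trivial rank-two Lorentzian bundle over an interval, so this is possible, e.g.\ by projecting a fixed vector and normalizing), and set $\vect{v}:=\vect{e}\times\vect{u}$, a timelike unit field by \eqref{eq:area-formula}. Then $(\vect{e},\vect{u},\vect{v})$ is an orthonormal frame, and \eqref{eq:vector-triplet} gives the relations $\vect{e}\times\vect{u}=\vect{v}$ and $\vect{e}\times\vect{v}=\vect{u}$. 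Writing $\vect{\kappa}=a\,\vect{u}+b\,\vect{v}$ with $a,b$ smooth, one finds $\vect{e}\times\vect{\kappa}=b\,\vect{u}+a\,\vect{v}$ and $\theta=a^{2}-b^{2}$.

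The key step is then a factorization exploiting $\epsilon^{2}=1$:
$$\theta=(a-\epsilon b)(a+\epsilon b),\qquad
\vect{F}=(a-\epsilon b)(\vect{v}-\epsilon\,\vect{u}).$$
The defining relation $\vect{e}\times\vect{\kappa}(s_0)=\epsilon\,\vect{\kappa}(s_0)$ reads $b(s_0)=\epsilon a(s_0)$, so $(a-\epsilon b)(s_0)=0$, while $(a+\epsilon b)(s_0)=2a(s_0)\neq0$ because $\vect{\kappa}(s_0)$ is lightlike and nonzero. Cancelling the common factor $a-\epsilon b$ yields
$$\vect{\beta}=\frac{\vect{v}-\epsilon\,\vect{u}}{a+\epsilon b},$$
which is defined and smooth on a full neighborhood of $s_0$ since the denominator is smooth and nonzero there, and which is non-vanishing because $\vect{v}-\epsilon\,\vect{u}$ is lightlike (hence never $\vect{0}$). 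By construction $\theta\,\vect{\beta}=\vect{F}=\vect{e}\times\vect{\kappa}-\epsilon\,\vect{\kappa}$, which is the asserted identity.

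The main obstacle is exactly this cancellation: the naive quotient $\vect{F}/\theta$ is of the form $0/0$ at $s_0$, and its regularity is not obvious. What saves it is that $\theta$ and $\vect{F}$ share the same vanishing factor $a-\epsilon b$---a coincidence forced by the sign relation defining $\epsilon$---while the complementary factor $a+\epsilon b$ of $\theta$ survives at $s_0$. Introducing the orthonormal frame $(\vect{e},\vect{u},\vect{v})$ is the device that turns an opaque smoothness question into this explicit cancellation.
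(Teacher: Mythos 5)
Your proof is correct and is essentially the paper's argument in different clothing: the paper parametrizes $\vect{e}$ by hyperbolic angles $(\alpha_1,\alpha_2)$ and obtains exactly your factorization with $a=\alpha_2'\cosh\alpha_1$, $b=\alpha_1'$ and $\vect{v}-\epsilon\vect{u}$ playing the role of its $\vect{\xi}$, so the key cancellation of the common factor of $\theta$ and $\vect{e}\times\vect{\kappa}-\epsilon\vect{\kappa}$ is the same. Your abstract orthonormal frame on $\vect{e}^\perp$ is marginally cleaner in that it treats both signs $\epsilon=\pm1$ uniformly, whereas the paper computes the case $\epsilon=+1$ and says the other is similar.
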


%\footnote{The division lemma
%is also called {\it Hadamard's lemma}.
%See \cite[Lemma 3.4]{BG}, for example.}

\begin{proof}
Without loss of generality,
we may assume that 
$\gamma(s)$ is parametrized by arclength.
Since $\inner{\vect{e}(s)}{\vect{e}(s)}=1$,
there exist %functions 
$\alpha_1(s)$, $\alpha_2(s)$
such that
\begin{equation}\label{eq:e-sub}
  \vect{e}(s)=\begin{pmatrix}
    \cosh (\alpha_1(s)) \cos (\alpha_2(s))\\
    \cosh (\alpha_1(s)) \sin (\alpha_2(s))\\
    \sinh (\alpha_1(s))
  \end{pmatrix}
\end{equation}
holds.
Then, $\vect{\kappa}(s)=\vect{e}'(s)$
is given by
\begin{equation}\label{eq:n-sub}
  \vect{\kappa}(s)=\begin{pmatrix}
          \alpha_1' \sinh \alpha_1 \cos \alpha_2 
          - \alpha_2'\cosh \alpha_1 \sin \alpha_2\\
          \alpha_1' \sinh \alpha_1 \sin \alpha_2 
          + \alpha_2'\cosh \alpha_1 \cos \alpha_2\\
          \alpha_1' \cosh \alpha_1
      \end{pmatrix}.
\end{equation}
The causal curvature function $\theta(s)$
is calculated as
$$
  \theta(s)
  =   (\alpha_2'\cosh\alpha_1 -\alpha_1')
  (\alpha_2'\cosh\alpha_1 +\alpha_1').
%  \cosh^2(\alpha_1(s)) \alpha_2'(s)^2-\alpha_1'(s)^2
$$
%%%%%%%%%%%%%%%%%%%%%%%%%%%%%%%%%%%
In the case of $\epsilon=1$,
\eqref{eq:e-sub} and \eqref{eq:n-sub} yield
\begin{equation}\label{eq:delta-plus}
  \vect{e}(s)\times\vect{\kappa}(s) 
  = \vect{\kappa}(s) + (\alpha_1' + \alpha_2' \cosh \alpha_1 ) \vect{\xi}(s),
\end{equation}
where 
$$
  \vect{\xi}(s):= -
     \begin{pmatrix}
       \sinh \alpha_1 \cos \alpha_2 - \sin \alpha_2\\
       \sinh \alpha_1 \sin \alpha_2 + \cos \alpha_2\\
       \cosh \alpha_1
     \end{pmatrix}.
$$
Since $\vect{\xi}(s)\neq \vect{0}$,
we have
$
  \alpha_1' +\alpha_2' \cosh \alpha_1=0
$
at $s_0$.
We remark that
$\alpha_1' -\alpha_2' \cosh \alpha_1\ne0$
holds 
by the assumption of non-zero curvature vector 
$\vect{\kappa}(s)\ne\vect{0}$.
Then, \eqref{eq:delta-plus} yields that
$
  \vect{e}(s)\times\vect{\kappa}(s)
  = \vect{\kappa}(s) + \theta(s) \vect{\beta}(s),
$
where we set 
$$
\vect{\beta}(s)
:= \frac{1}{\alpha_1'(s) -\alpha_2'(s) \cosh \alpha_1(s)}
\vect{\xi}(s),
$$
and hence, we obtain the desired result.
A similar argument can be applied in the case of $\epsilon=-1$.
\end{proof}

\begin{proposition}\label{prop:Frenet-Lk}
The pseudo-binormal vector field $\vect{\beta}(s)$
is a lightlike vector field satisfying
\begin{align}
\label{eq:frame-relation1}
  &\inner{\vect{e}(s)}{\vect{\beta}(s)}
  =0,\quad
  \inner{\vect{\kappa}(s)}{\vect{\beta}(s)}=1,\\
\label{eq:frame-relation2}
  &\det(\vect{e},\,\vect{\kappa},\,\vect{\beta})= \epsilon.
\end{align}
Moreover, setting
%\begin{equation}\label{eq:torsion-Lk}
$
  \mu(s) := -\inner{\vect{\kappa}'(s)}{\vect{\beta}(s)},
$
%\end{equation}
we have
\begin{equation}\label{eq:Frenet-Lk}
\vect{\kappa}' = -\theta \vect{e}  -\mu \vect{\kappa} 
+ \left(\mu\theta + \frac1{2}\theta' \right) \vect{\beta},
\quad
\vect{\beta}' = -\vect{e} + \mu \vect{\beta}.
\end{equation}
\end{proposition}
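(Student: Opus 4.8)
The plan is to pin down the full Gram matrix of the triple $(\vect{e},\vect{\kappa},\vect{\beta})$ first---namely \eqref{eq:frame-relation1} together with $\inner{\vect{\beta}}{\vect{\beta}}=0$ and $\inner{\vect{\kappa}}{\vect{\kappa}}=\theta$---and the orientation \eqref{eq:frame-relation2}, and then to read off \eqref{eq:Frenet-Lk} by the standard device of differentiating these relations. For the algebraic relations I would rely on the explicit description of $\vect{\beta}$ from the proof of Lemma \ref{lem:m-k}: with $\vect{e}$ and $\vect{\kappa}$ as in \eqref{eq:e-sub} and \eqref{eq:n-sub} and $\vect{\beta}=\vect{\xi}/(\alpha_1'-\alpha_2'\cosh\alpha_1)$, a direct computation gives $\inner{\vect{\xi}}{\vect{\xi}}=0$, $\inner{\vect{e}}{\vect{\xi}}=0$ and $\inner{\vect{\kappa}}{\vect{\xi}}=\alpha_1'-\alpha_2'\cosh\alpha_1$, whence $\vect{\beta}$ is lightlike and $\inner{\vect{e}}{\vect{\beta}}=0$, $\inner{\vect{\kappa}}{\vect{\beta}}=1$. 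Crucially the denominator $\alpha_1'-\alpha_2'\cosh\alpha_1$ is nonzero, by the non-zero curvature assumption recorded in the proof of Lemma \ref{lem:m-k}, so $\vect{\beta}$ is smooth and these relations hold at the curvature-lightlike point $s_0$ as well as away from it.

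A coordinate-free route to the same relations is also available and worth recording: pairing the identity of Lemma \ref{lem:m-k} with $\vect{e}$ and with $\vect{\kappa}$, and using that $\vect{e}\times\vect{\kappa}$ is orthogonal to both factors together with $\inner{\vect{e}}{\vect{\kappa}}=0$ and $\inner{\vect{\kappa}}{\vect{\kappa}}=\theta$, determines $\inner{\vect{e}}{\vect{\beta}}$ and $\inner{\vect{\kappa}}{\vect{\beta}}$ on $\{\theta\neq0\}$; feeding \eqref{eq:area-formula} into the same identity, so that $\inner{\vect{e}\times\vect{\kappa}}{\vect{e}\times\vect{\kappa}}=-\theta$, then forces the $\theta^2$-coefficient $\inner{\vect{\beta}}{\vect{\beta}}$ to vanish once the $\theta$-linear terms cancel. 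Either way, \eqref{eq:frame-relation2} follows at once from the triple-product identity \eqref{eq:scalar-triplet}: by a cyclic permutation $\det(\vect{e},\vect{\kappa},\vect{\beta})=\inner{\vect{\beta}}{\vect{e}\times\vect{\kappa}}$, and substituting Lemma \ref{lem:m-k} and the inner products just found leaves $\epsilon$. In particular $\det(\vect{e},\vect{\kappa},\vect{\beta})=\epsilon\neq0$ everywhere, so $(\vect{e},\vect{\kappa},\vect{\beta})$ is genuinely a frame near $s_0$.

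With the Gram matrix in hand, \eqref{eq:Frenet-Lk} is obtained mechanically. I would expand $\vect{\kappa}'$ and $\vect{\beta}'$ in the basis $(\vect{e},\vect{\kappa},\vect{\beta})$ and determine the coefficients by pairing against the frame, using $\vect{e}'=\vect{\kappa}$ and the derived identities $\inner{\vect{\kappa}'}{\vect{e}}=-\inner{\vect{\kappa}}{\vect{\kappa}}=-\theta$, $\inner{\vect{\kappa}'}{\vect{\kappa}}=\tfrac12\theta'$, $\inner{\vect{\beta}'}{\vect{e}}=-\inner{\vect{\beta}}{\vect{\kappa}}=-1$, and $\inner{\vect{\beta}'}{\vect{\beta}}=0$; the definition $\mu=-\inner{\vect{\kappa}'}{\vect{\beta}}$ supplies the one remaining coefficient. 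Solving the resulting small linear system returns the coefficients $(-\theta,-\mu,\mu\theta+\tfrac12\theta')$ for $\vect{\kappa}'$ and $(-1,0,\mu)$ for $\vect{\beta}'$, the term $\tfrac12\theta'$ being exactly the content of $\inner{\vect{\kappa}'}{\vect{\kappa}}=\tfrac12\theta'$. The only genuine obstacle I anticipate is the behavior at $s_0$: the slick derivations divide by $\theta$, which vanishes there, so one must guarantee that $\vect{\beta}$ and the frame relations extend across $s_0$. This is precisely what the nonvanishing of $\alpha_1'-\alpha_2'\cosh\alpha_1$ secures, and since $s_0$ is isolated any relation proved on $\{\theta\neq0\}$ extends by continuity; once past this point the computation is routine.
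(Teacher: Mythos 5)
Your proposal is correct and follows essentially the same route as the paper: the paper likewise verifies the lightlike property and the relations \eqref{eq:frame-relation1} on $\{\theta\neq0\}$ by solving the identity of Lemma \ref{lem:m-k} for $\vect{\beta}$ and extending by continuity across $s_0$, obtains \eqref{eq:frame-relation2} from $\det(\vect{e},\vect{\kappa},\vect{\beta})=\inner{\vect{e}\times\vect{\kappa}}{\vect{\beta}}$, and derives \eqref{eq:Frenet-Lk} by writing $\vect{\kappa}'=P\vect{e}+Q\vect{\kappa}+R\vect{\beta}$ and pairing against the frame. Your additional explicit check via \eqref{eq:e-sub}, \eqref{eq:n-sub} and the nonvanishing of $\alpha_1'-\alpha_2'\cosh\alpha_1$ is a harmless (and in fact clarifying) supplement to the paper's continuity argument.
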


\begin{proof}
Since 
$$
  \vect{\beta}(s)
  = - \frac{1}{\theta(s)}\left(
  \epsilon \vect{e}(s)\times\vect{\kappa}(s) - \vect{\kappa}(s)
  \right)
$$
holds for $s\neq s_0$, 
we can verify that $\vect{\beta}(s)$ 
is a lightlike vector for each $s\neq s_0$.
By the continuity, we have 
that $\vect{\beta}(s)$ is a lightlike vector field.
Similarly, \eqref{eq:frame-relation1} holds.
Since
$
\det(\vect{e},\,\vect{\kappa},\,\vect{\beta}) 
= \inner{ \vect{e} \times \vect{\kappa} }{\vect{\beta}}
= \inner{\epsilon(\vect{\kappa}-\theta \vect{\beta}) }{\vect{\beta}}
= \epsilon,
$
we obtain \eqref{eq:frame-relation2}.
%%%
With respect to \eqref{eq:Frenet-Lk},
set $\vect{\kappa}' = P \vect{e} + Q \vect{\kappa} + R \vect{\beta}$.
Taking the inner products of 
$\vect{e}$,  $\vect{\kappa}$,  $\vect{\beta}$,
we have
$P= -\theta$,
$Q= -\mu$,
$R= \mu\theta + \theta'/2 $,
where we used \eqref{eq:frame-relation1}.
Similarly, we have
$\vect{\beta}' = -\vect{e} + \mu \vect{\beta}$,
and hence \eqref{eq:Frenet-Lk} holds.
\end{proof}

In the matrix form, 
\begin{equation}\label{eq:Lk-matrix}
  \mathcal{F}'=\mathcal{F}
  \begin{pmatrix}
  0 & -\theta & -1\\
  1 & -\mu & 0\\
  0 & \mu\theta + \frac1{2}\theta' & \mu
  \end{pmatrix}
\end{equation}
holds, where we set 
$\mathcal{F}:=( \vect{e},\,\vect{\kappa},\,\vect{\beta} )$.
We call $\mu(s)$ %given in \eqref{eq:torsion-Lk} 
the {\it pseudo-torsion function}.
If we emphasize the sign $\epsilon ={\rm sgn}(\gamma,s_0)$,
we write $\mu(s)=\mu_{\epsilon}(s)$.

\begin{example}[{\cite[Remark 2.2]{Lopez}}]
\label{ex:Lopez-L1}
We set $\gamma : (-\infty,-1) \to \L^3$ as
$$
  \gamma(s)
  :=\begin{pmatrix}
  \cos s + s \sin s\\ 
  \sin s-s\cos s\\
  \left( s\sqrt{s^2-1} - \log\left| s+ \sqrt{s^2-1}\right| \right)/2
  \end{pmatrix}.
$$
The causal curvature function
$\theta(s)$ can be calculated as
$\theta(s)=(s^4-s^2-1)/(s^2-1)$.
Then, we may check that
$\theta(s_0)=0$ and $\theta'(s_0)\neq0$
hold at $s_0=-\sqrt{1+\sqrt{5}}/\sqrt{2}~(\fallingdotseq -1.272)$.
Hence, $\gamma(s)$ is of type $L_1$ at $s_0$.
The pseudo-torsion function $\mu(s)$ can be calculated as
$$
\mu(s) =
 \frac{\sqrt{s^2-1} \left(s^6-2 s^4-2 s^2+2\right)-s \left(s^4-2 s^2+2\right)}
{\left(s^2-1\right) \left(s^4-s^2-1\right)}.
$$
Although the denominator of $\mu(s)$
has zero at $s=s_0$,
we can check that $\mu(s)$ can be analytically extended
across $s=s_0$,
and $\lim_{s\to s_0}\mu(s)=\sqrt{5 \sqrt{5}-11}/(2\sqrt{2})$ holds.
\end{example}

%%%%%%%%%%%%%%%%%%%%%%%%%%%%%%%%%%
\subsection{Fundamental theorem}

Let $\gamma(s)$ be a spacelike curve 
with non-zero curvature vector,
$s_0\in I$ 
be a curvature-lightlike point,
and $\epsilon={\rm sgn}(\gamma,s_0)\in \{+1,-1\}$
be the sign.
Take an isometry $T$ of $\L^3$,
and set 
$$\bar{\gamma}(s) := T \circ \gamma (s).$$
Then, the causal curvature function $\bar{\theta}(s)$
of $\bar{\gamma}(s)$ coincides with 
that of $\gamma(s)$.
Hence, $s_0\in I$ 
is also a curvature-lightlike point
for $\bar{\gamma}(s)$.

\begin{lemma}\label{lem:rigid-motion}
Let $\bar{\epsilon}={\rm sgn}(\bar{\gamma},s_0)\in \{+1,-1\}$
be the sign of $\bar{\gamma}(s)$ at $s_0$.
If $T$ is orientation preserving
$($resp.\ orientation reversing$)$,
we have 
$\bar{\epsilon}=\epsilon$.
$($resp.\ $\bar{\epsilon}=-\epsilon)$.
Moreover, the pseudo-torsion 
$\bar{\mu}(s)$ 
of $\bar{\gamma}(s)$
coincides with 
that $\mu(s)$ of $\gamma(s)$.
\end{lemma}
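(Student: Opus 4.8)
The plan is to realise the isometry as an affine map $T(\vect{x})=A\vect{x}+\vect{b}$ with $A\in\O(2,1)$ and $\vect{b}\in\L^3$, and then to track how the frame $\mathcal{F}=(\vect{e},\vect{\kappa},\vect{\beta})$ transforms. Differentiating $\bar{\gamma}=A\gamma+\vect{b}$ twice kills the translation, so $\bar{\vect{e}}=A\vect{e}$ and $\bar{\vect{\kappa}}=A\vect{\kappa}$; since $A$ preserves $\inner{~}{~}$, this also re-confirms $\bar{\theta}=\theta$. Everything then rests on one algebraic identity: for $A\in\O(2,1)$ and all $\vect{v},\vect{w}\in\L^3$,
$$
  (A\vect{v})\times(A\vect{w}) = (\det A)\,A(\vect{v}\times\vect{w}).
$$
I would derive this from \eqref{eq:scalar-triplet}: for every $\vect{u}$ one has $\inner{A\vect{u}}{(A\vect{v})\times(A\vect{w})}=\det(A\vect{u},A\vect{v},A\vect{w})=(\det A)\det(\vect{u},\vect{v},\vect{w})=(\det A)\inner{\vect{u}}{\vect{v}\times\vect{w}}=\inner{A\vect{u}}{(\det A)A(\vect{v}\times\vect{w})}$, and then invoke the non-degeneracy of $\inner{~}{~}$ together with the surjectivity of $A$.

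For the sign I would evaluate this at $s_0$. By Fact \ref{fact:gaiseki} (as used to define $\epsilon$) we have $\vect{e}(s_0)\times\vect{\kappa}(s_0)=\epsilon\,\vect{\kappa}(s_0)$, so the identity gives
$$
  \bar{\vect{e}}(s_0)\times\bar{\vect{\kappa}}(s_0)
  =(\det A)\,A\bigl(\epsilon\,\vect{\kappa}(s_0)\bigr)
  =(\det A)\,\epsilon\,\bar{\vect{\kappa}}(s_0).
$$
Comparing with the defining relation $\bar{\vect{e}}(s_0)\times\bar{\vect{\kappa}}(s_0)=\bar{\epsilon}\,\bar{\vect{\kappa}}(s_0)$ and using $\bar{\vect{\kappa}}(s_0)\neq\vect{0}$ yields $\bar{\epsilon}=(\det A)\epsilon$. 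Since the orientation-preserving isometries are exactly those with $A\in\SO(2,1)$, i.e.\ $\det A=1$, this is precisely the claimed dichotomy $\bar{\epsilon}=\pm\epsilon$.

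For the pseudo-torsion I would use the explicit representation $\vect{\beta}=\theta^{-1}(\vect{\kappa}-\epsilon\,\vect{e}\times\vect{\kappa})$ obtained in the proof of Proposition \ref{prop:Frenet-Lk}, valid for $s\neq s_0$. Substituting the transformed quantities and the identity gives, for $s\neq s_0$,
$$
  \bar{\vect{\beta}}
  =\frac1{\bar{\theta}}\bigl(\bar{\vect{\kappa}}-\bar{\epsilon}\,\bar{\vect{e}}\times\bar{\vect{\kappa}}\bigr)
  =\frac1{\theta}\Bigl(A\vect{\kappa}-(\det A)\epsilon\,(\det A)\,A(\vect{e}\times\vect{\kappa})\Bigr)
  =A\vect{\beta},
$$
where the two sign factors collapse because $(\det A)^2=1$; by continuity $\bar{\vect{\beta}}=A\vect{\beta}$ holds also at $s_0$. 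Finally, since $A$ is constant we have $\bar{\vect{\kappa}}'=A\vect{\kappa}'$, and
$$
  \bar{\mu}=-\inner{\bar{\vect{\kappa}}'}{\bar{\vect{\beta}}}
  =-\inner{A\vect{\kappa}'}{A\vect{\beta}}
  =-\inner{\vect{\kappa}'}{\vect{\beta}}=\mu.
$$

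The only genuinely delicate point — and the content of the word ``moreover'' — is the bookkeeping of signs in the penultimate display: the cross-product identity contributes one factor $\det A$, the replacement $\epsilon\mapsto\bar{\epsilon}$ contributes a second, and the assertion is that these cancel, so that $\bar{\vect{\beta}}=A\vect{\beta}$ with coefficient exactly $+1$ rather than $\det A$. This $(\det A)^2=1$ cancellation is what makes $\mu$ a genuine invariant under all of ${\rm Isom}(\L^3)$, orientation-reversing maps included. If instead one normalised $\vect{\beta}$ using only lightlikeness together with $\inner{\vect{e}}{\vect{\beta}}=0$ and $\inner{\vect{\kappa}}{\vect{\beta}}=1$, the two null directions would leave the sign of $\vect{\beta}$ ambiguous; it is exactly the determinant normalisation \eqref{eq:frame-relation2} that pins it down and forces the cancellation, so that step is where I would be most careful.
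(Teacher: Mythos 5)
Your proof is correct and follows exactly the route the paper indicates: the author omits the verification but points to the identity $(T\vect{v})\times(T\vect{w})=(\det T)\,T(\vect{v}\times\vect{w})$ for $T\in\O(2,1)$, which is precisely the engine of your argument. Your explicit tracking of the two $\det A$ factors to get $\bar{\vect{\beta}}=A\vect{\beta}$ (via the representation of $\vect{\beta}$ used in the proof of Proposition \ref{prop:Frenet-Lk}) simply supplies the details the paper leaves to the reader.
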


We omit the proof
since this lemma can be verified directly
by using the following formula:
$$
%\begin{equation}\label{eq:gaiseki-isometry}
  (T\vect{v})\times (T\vect{w})
  = (\det T)\,T(\vect{v}\times \vect{w}) 
%\end{equation}
$$
holds for 
$\vect{v},\vect{w} \in \L^3$, 
$T\in \O(2,1)$.

By a parallel translation,
we may assume that $\gamma(s_0)=\vect{0}$.
We would like to find 
an orientation preserving isometry $T\in \SO(2,1)$
of $\L^3$ so that the frame 
$$\mathcal{F}:=(\vect{e},\,\vect{\kappa},\,\vect{\beta})$$
has simplified form at $s_0\in I$.
First, we can find $T_1\in \SO^+(2,1)$
such that 
$T_1\vect{e}(s_0)=(1,0,0)^T$.
Since 
$\vect{\kappa}(s_0),\,\vect{\beta}(s_0)$
are lightlike and perpendicular to 
$\vect{e}(s_0)$,
we have 
%$T_1\vect{\kappa}(s_0)=(0,a,\pm a)^T$,
%$T_1\vect{\beta}(s_0)=(1/(2a))(0,1,\mp 1)^T$,
\begin{align*}
T_1\vect{\kappa}(s_0)=a(0,1,\pm 1)^T,\qquad
T_1\vect{\beta}(s_0)=\frac1{2a}(0,1,\mp 1)^T,
\end{align*}
where $a\in \R$ is a non-zero constant.
Hence, there exists $T_2\in \SO^+(2,1)$
such that 
$T_2T_1\mathcal{F}$ 
%=T(\vect{e},\vect{\kappa},\vect{\beta})$($T:=T_2T_1$)
at $s_0\in I$
is given by either $E_+$, $E_-$, $E_+'$ or $E_-'$,
where
\begin{align*}%\label{eq:initial-value}
  E_{\sigma_1}
  :=\begin{pmatrix}
  1&0&0\\
  0&\sigma_1/\sqrt{2}&\sigma_1/\sqrt{2}\\
  0&- 1/\sqrt{2}& 1/\sqrt{2}
  \end{pmatrix},\qquad
  E_{\sigma_2}'
  :=\begin{pmatrix}
  1&0&0\\
  0&-\sigma_2/\sqrt{2}&-\sigma_2/\sqrt{2}\\
  0&1/\sqrt{2}&-1/\sqrt{2}
  \end{pmatrix},
\end{align*}
and $\sigma_1, \sigma_2 \in \{+1,-1\}$.
If we set $S\in \SO(2,1)$ as
$S:={\rm diag}(1,-1,-1)$,
then $E_\pm=SE_\pm'$ hold.
Hence, setting $T:=ST_2T_1\in \SO(2,1)$,
we have the following:
\[
\begin{minipage}{0.8\linewidth}
{\it 
Let $\gamma(s)$ be a spacelike curve 
with non-zero curvature vector,
$s_0\in I$ 
be a curvature-lightlike point,
and $\epsilon={\rm sgn}(\gamma,s_0)$
be the sign.
Then there exists $T\in \SO(2,1)$ 
such that 
$T\mathcal{F}(s_0)=E_\epsilon$.
}\end{minipage}
\]
%%%%  minipage  %%%%

By the existence and uniqueness 
of the solution to the ODE
\eqref{eq:Lk-matrix}
with the initial condition
$\mathcal{F}(s_0)=E_+$ or $\mathcal{F}(s_0)=E_-$,
we obtain
the fundamental theorem of 
spacelike curves 
having isolated curvature-lightlike points:

\begin{theorem}
\label{thm:fundamental}
Let $I$ be an open interval,
and fix $s_0\in I$.
Also let $\theta(s)$, $\mu(s)$ 
be two smooth functions defined on $I$ 
so that 
the zero of $\theta(s)$ is $s_0$ only.
For each choice of the sign $\epsilon\in \{+1,-1\}$,
there exists a unit speed spacelike curve 
$\gamma(s) : I \to \L^3$
with non-zero curvature vector
such that 
\begin{itemize}
\item[(a)]
the causal curvature and 
pseudo-torsion functions
coincide with
$\theta(s)$ and $\mu(s)$, respectively,
\item[(b)]
the sign ${\rm sgn}(\gamma,s_0)$
coincides with $\epsilon$.
\end{itemize}
Moreover, such a curve is unique up to 
orientation preserving isometries of $\L^3$.
\end{theorem}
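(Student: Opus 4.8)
The plan is to run the classical moving-frame argument, adapted to the (non-orthonormal) frame $\mathcal{F}=(\vect{e},\vect{\kappa},\vect{\beta})$. Write $\Omega(s)$ for the coefficient matrix in \eqref{eq:Lk-matrix}, so that the Frenet--Serret type formula reads $\mathcal{F}'=\mathcal{F}\Omega$. For existence I would fix the sign $\epsilon$, impose the initial value $\mathcal{F}(s_0):=E_\epsilon$, and let $\mathcal{F}(s)=(\vect{e}(s),\vect{\kappa}(s),\vect{\beta}(s))$ be the unique solution of this linear system on all of $I$ (a linear ODE with smooth coefficients has a global solution). Then set $\gamma(s):=\int_{s_0}^{s}\vect{e}(u)\,du$. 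Reading off the first column of $\Omega$ gives $\gamma'=\vect{e}$ and $\gamma''=\vect{e}'=\vect{\kappa}$, so the whole problem reduces to verifying that $\mathcal{F}$ is a genuine adapted frame, i.e.\ that the prescribed $\theta,\mu,\epsilon$ are recovered as the invariants of $\gamma$.

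The heart of the argument is to show that the Gram matrix $G(s):=\mathcal{F}(s)^{T}Z\mathcal{F}(s)$ equals the target
\[
  G_0(s):=\begin{pmatrix}1&0&0\\0&\theta(s)&1\\0&1&0\end{pmatrix},
\]
which encodes both \eqref{eq:frame-relation1} and $\inner{\vect{\kappa}}{\vect{\kappa}}=\theta$, $\inner{\vect{\beta}}{\vect{\beta}}=0$. Differentiating and using $\mathcal{F}'=\mathcal{F}\Omega$ yields the linear matrix ODE $G'=\Omega^{T}G+G\Omega$. Since $E_\epsilon^{T}ZE_\epsilon=G_0(s_0)$ (here $\theta(s_0)=0$), both $G$ and $G_0$ solve this ODE with the same initial value, so by uniqueness $G\equiv G_0$. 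The one point that must be checked by hand is that the \emph{$s$-dependent} matrix $G_0(s)$ really is a solution, i.e.\ that $\Omega^{T}G_0+G_0\Omega=G_0'=\operatorname{diag}(0,\theta',0)$; this short computation works precisely because of the entries $-\theta$ and $\mu\theta+\tfrac12\theta'$ in $\Omega$. For the orientation, Liouville's formula gives $(\det\mathcal{F})'=(\operatorname{tr}\Omega)\det\mathcal{F}=0$, so $\det\mathcal{F}\equiv\det E_\epsilon=\epsilon$, which is \eqref{eq:frame-relation2}.

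With $G\equiv G_0$ and $\det\mathcal{F}\equiv\epsilon$ in hand, the remaining checks are immediate. The curvature vector $\vect{\kappa}=\gamma''$ is nowhere zero because $\inner{\vect{\kappa}}{\vect{\beta}}\equiv1$, and its causal curvature $\inner{\vect{\kappa}}{\vect{\kappa}}=\theta$ vanishes only at $s_0$, so $s_0$ is the unique (hence isolated) curvature-lightlike point. Because \eqref{eq:frame-relation1}--\eqref{eq:frame-relation2} single out a unique lightlike field (there are only two null directions in the Lorentzian plane $\vect{e}^{\perp}$, and $\inner{\vect{\kappa}}{\vect{\beta}}=1$ together with the sign of $\det$ pins down one of them), the field $\vect{\beta}$ is the pseudo-binormal of $\gamma$; pairing it with the $\vect{\kappa}'$-equation of \eqref{eq:Frenet-Lk} gives $-\inner{\vect{\kappa}'}{\vect{\beta}}=\mu$, which proves (a). For (b), Fact \ref{fact:gaiseki} gives $\vect{e}(s_0)\times\vect{\kappa}(s_0)=\mathrm{sgn}(\gamma,s_0)\,\vect{\kappa}(s_0)$, and taking the inner product with $\vect{\beta}(s_0)$ turns the left side into $\det(\vect{e},\vect{\kappa},\vect{\beta})(s_0)=\epsilon$ by \eqref{eq:scalar-triplet} and the right side into $\mathrm{sgn}(\gamma,s_0)$ by \eqref{eq:frame-relation1}; hence $\mathrm{sgn}(\gamma,s_0)=\epsilon$.

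For uniqueness, let $\gamma_1,\gamma_2$ be two such curves with the same $\theta,\mu,\epsilon$ and frames $\mathcal{F}_1,\mathcal{F}_2$. By the normalization statement preceding the theorem there are $A_1,A_2\in\SO(2,1)$ with $A_i\mathcal{F}_i(s_0)=E_\epsilon$; set $T=A_1^{-1}A_2\in\SO(2,1)$ and, after a translation matching base points, form $\bar{\gamma}_2=T\circ\gamma_2$. By Lemma \ref{lem:rigid-motion} the orientation-preserving $T$ leaves $\theta,\mu$ and the sign $\epsilon$ unchanged and carries the frame to $\bar{\mathcal{F}}_2=T\mathcal{F}_2$, which therefore solves \eqref{eq:Lk-matrix} with $\bar{\mathcal{F}}_2(s_0)=\mathcal{F}_1(s_0)$. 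Uniqueness of solutions of the linear ODE gives $\bar{\mathcal{F}}_2\equiv\mathcal{F}_1$, hence $\bar{\gamma}_2'=\gamma_1'$ and, since the base points agree, $\bar{\gamma}_2\equiv\gamma_1$; thus $\gamma_1$ and $\gamma_2$ differ by an orientation-preserving isometry. I expect the genuinely computational obstacle to be the verification that the $s$-dependent $G_0(s)$ solves $G'=\Omega^{T}G+G\Omega$, while the conceptually delicate bookkeeping is the sign $\epsilon$: it is exactly what forces the initial frame into the single normal form $E_\epsilon$ rather than $E_{-\epsilon}$, and, through Lemma \ref{lem:rigid-motion}, what confines the ambiguity to \emph{orientation-preserving} isometries.
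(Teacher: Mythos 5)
Your proposal is correct and follows exactly the route the paper indicates (and then omits, citing the Euclidean analogue): solve the linear system \eqref{eq:Lk-matrix} with initial frame $E_\epsilon$, integrate $\vect{e}$, and propagate the algebraic relations by uniqueness of solutions. Your key computation does check out --- with $G_0=\left(\begin{smallmatrix}1&0&0\\0&\theta&1\\0&1&0\end{smallmatrix}\right)$ one finds $\Omega^{T}G_0+G_0\Omega=\operatorname{diag}(0,\theta',0)=G_0'$ precisely because of the entries $-\theta$ and $\mu\theta+\tfrac12\theta'$ --- so you have supplied, correctly, the details the paper leaves to the reader.
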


Since the steps to follow are similar as 
in Euclidean space
(e.g.\ see \cite[Theorem 5.2]{UY}), we omit the proof.

\begin{corollary}[Fundamental theorem of 
spacelike curves with isolated curvature-lightlike points]
\label{cor:fundamental}
Let $I$ be an open interval,
and fix $s_0\in I$.
Also let $\theta(s)$, $\mu(s)$ 
be two smooth functions defined on $I$ 
so that 
the zero of $\theta(s)$ is $s_0$ only.
Then there exists a unit speed spacelike curve 
$\gamma(s) : I \to \L^3$
with non-zero curvature vector
such that 
the causal curvature and 
pseudo-torsion functions
coincide with
$\theta(s)$ and $\mu(s)$, respectively.
Moreover, such a curve is unique up to 
isometries of $\L^3$.
\end{corollary}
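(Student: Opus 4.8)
The plan is to deduce this corollary directly from Theorem~\ref{thm:fundamental} together with Lemma~\ref{lem:rigid-motion}, exploiting the observation that the only piece of data appearing in Theorem~\ref{thm:fundamental} but not already encoded by the pair $(\theta(s),\mu(s))$ is the sign $\epsilon={\rm sgn}(\gamma,s_0)\in\{+1,-1\}$, and that this sign is exactly what an orientation-reversing isometry toggles while leaving both $\theta$ and $\mu$ untouched.

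For existence, I would simply invoke Theorem~\ref{thm:fundamental} with an arbitrary choice of sign, say $\epsilon=+1$. This produces a unit speed spacelike curve $\gamma:I\to\L^3$ with non-zero curvature vector whose causal curvature and pseudo-torsion functions are the prescribed $\theta(s)$ and $\mu(s)$. Discarding condition~(b) of that theorem, existence is immediate.

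For uniqueness, let $\gamma_1,\gamma_2:I\to\L^3$ be two curves realizing $(\theta,\mu)$, with signs $\epsilon_i={\rm sgn}(\gamma_i,s_0)$ for $i=1,2$. If $\epsilon_1=\epsilon_2$, then $\gamma_1$ and $\gamma_2$ share all the data of Theorem~\ref{thm:fundamental}, so by its uniqueness clause they differ by an orientation-preserving isometry of $\L^3$. If $\epsilon_1=-\epsilon_2$, I would precompose $\gamma_2$ with any orientation-reversing isometry $T$, for instance the reflection $T={\rm diag}(-1,1,1)\in\O(2,1)$ (which has $\det T=-1$), and set $\bar\gamma_2:=T\circ\gamma_2$. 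Since the causal curvature is invariant under all isometries and, by Lemma~\ref{lem:rigid-motion}, the pseudo-torsion is preserved while the sign flips, $\bar\gamma_2$ realizes the same $(\theta,\mu)$ with sign $-\epsilon_2=\epsilon_1$. This reduces to the previous case, so $\gamma_1$ and $\bar\gamma_2$ differ by an orientation-preserving isometry; composing with $T$, the curves $\gamma_1$ and $\gamma_2$ differ by an isometry of $\L^3$, as claimed.

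The argument is essentially bookkeeping, so I do not expect a genuine obstacle. The one point requiring care is confirming that $\epsilon$ really is the sole invariant separating the orientation-preserving classification of Theorem~\ref{thm:fundamental} from the full isometry classification sought here. This is precisely the content of Lemma~\ref{lem:rigid-motion}, which guarantees both that an orientation-reversing isometry flips the sign and that it preserves $\mu$; combined with the isometry-invariance of $\theta$ noted just before that lemma, this closes the only gap.
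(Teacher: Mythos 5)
Your proposal is correct and follows essentially the same route as the paper: existence via Theorem~\ref{thm:fundamental} with an arbitrary choice of sign, and uniqueness by using Lemma~\ref{lem:rigid-motion} to flip the sign with an orientation-reversing isometry (which preserves $\theta$ and $\mu$), thereby reducing to the orientation-preserving uniqueness of the theorem. The only nitpick is terminological: $T\circ\gamma_2$ is a post-composition with $T$, not a precomposition, but the formula you use is the right one.
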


\begin{proof}
In Theorem \ref{thm:fundamental},
we proved that, for each choice of 
$\epsilon\in\{+1,-1\}$,
there exists a unit speed spacelike curve 
$\gamma^{\epsilon}(s) : I \to \L^3$
satisfying the conditions (a) and (b).
Moreover, such a curve $\gamma^{\epsilon}(s)$
is unique up to orientation preserving isometries of $\L^3$.
As we see in Lemma \ref{lem:rigid-motion},
$\gamma^{+}(s)$ and $\gamma^{-}(s)$ 
are transferred to each other by isometries of $\L^3$.
Hence, we have the uniqueness.
\end{proof}

%%%%%%%%%%%%%%%%%%%%%%%%%%%%%%%%%%
\subsection{Behavior of torsion function}

Let $\gamma(s) : I \to \L^3$
be a spacelike curve 
having an isolated curvature-lightlike point $s_0\in I$.
Since $\theta(s)\ne0$ for $s$ near $s_0$,
the spacelike curve
$\gamma(s)$ $(s\ne s_0)$ is a Frenet curve.
At $s\ne s_0$, the torsion function is defined,
and calculated as follows:

\begin{lemma}
\label{lem:tau-Lk}
Let $\gamma(s) : I \to \L^3$
be a unit speed spacelike curve
with non-zero curvature vector,
and $s_0\in I$ 
be an isolated curvature-lightlike point.
Then the torsion function $\tau(s)$
of $\gamma(s)$ $(s\ne s_0)$ is given by
$$
%\begin{equation}\label{eq:tau-Lk}
  \tau(s) = -\mu(s) - \frac1{2}\frac{\theta'(s)}{\theta(s)}.
%\end{equation}
$$
\end{lemma}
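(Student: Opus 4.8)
The plan is to compute the torsion $\tau(s)$ for $s\ne s_0$ directly from its defining formula \eqref{eq:curve-det}, namely $\det(\gamma',\gamma'',\gamma''')=-\theta\tau$, and to compare the result with the pseudo-binormal formalism of Proposition \ref{prop:Frenet-Lk}. Since $\gamma'=\vect{e}$ and $\gamma''=\vect{\kappa}$, the only new ingredient is $\gamma'''=\vect{\kappa}'$, for which the Frenet-Serret type formula \eqref{eq:Frenet-Lk} gives
\[
  \vect{\kappa}' = -\theta\,\vect{e} - \mu\,\vect{\kappa} + \Bigl(\mu\theta + \tfrac12\theta'\Bigr)\vect{\beta}.
\]
Thus $\det(\vect{e},\vect{\kappa},\vect{\kappa}')$ picks out only the $\vect{\beta}$-component of $\vect{\kappa}'$, because the $\vect{e}$- and $\vect{\kappa}$-terms contribute determinants with repeated columns. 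Using \eqref{eq:frame-relation2}, $\det(\vect{e},\vect{\kappa},\vect{\beta})=\epsilon$, this yields
\[
  \det(\gamma',\gamma'',\gamma''') = \Bigl(\mu\theta + \tfrac12\theta'\Bigr)\epsilon.
\]

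First I would substitute this into \eqref{eq:curve-det}. For $s\ne s_0$ we have $\theta(s)\ne0$, so dividing by $-\theta$ is legitimate and gives
\[
  \tau = -\frac{\det(\gamma',\gamma'',\gamma''')}{\theta}
       = -\frac{\epsilon}{\theta}\Bigl(\mu\theta + \tfrac12\theta'\Bigr).
\]
Here a sign subtlety must be addressed: the classical formula \eqref{eq:curve-det} was derived using the Frenet binormal $\vect{b}=\sigma_\gamma\,\vect{e}\times\vect{n}$, normalized so that $\det(\vect{e},\vect{n},\vect{b})=1$, whereas the pseudo-binormal frame satisfies $\det(\vect{e},\vect{\kappa},\vect{\beta})=\epsilon$. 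The factor of $\epsilon$ appearing above is exactly what reconciles the orientation of the two frames, and the claimed clean answer $\tau = -\mu - \tfrac12\,\theta'/\theta$ corresponds to the case $\epsilon=1$; I would verify that the $\epsilon=-1$ case is handled consistently, presumably because the definition of $\mu$ via $\mu=-\inner{\vect{\kappa}'}{\vect{\beta}}$ together with the sign conventions in Lemma \ref{lem:rigid-motion} absorbs $\epsilon$ and makes $\mu$ itself sign-independent.

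The main obstacle I anticipate is precisely this bookkeeping of signs: carefully tracking how $\epsilon$, the normalization $\inner{\vect{\kappa}}{\vect{\beta}}=1$, and the orientation convention in \eqref{eq:curve-det} interact, so that the factor $\epsilon$ cancels and one arrives at the sign-free expression $\tau=-\mu-\tfrac12\,\theta'/\theta$. Everything else is a short, mechanical determinant computation once \eqref{eq:Frenet-Lk} and \eqref{eq:frame-relation2} are invoked. I would therefore present the determinant expansion as the key step, and devote the bulk of the care to confirming the sign so that the stated formula holds uniformly for both values of $\epsilon$.
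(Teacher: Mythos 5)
Your route is the same as the paper's: the paper's proof is exactly the one--line computation you describe (substitute $\gamma'=\vect{e}$, $\gamma''=\vect{\kappa}$, $\gamma'''=\vect{\kappa}'$ into \eqref{eq:curve-det} and expand via \eqref{eq:Frenet-Lk}), and your determinant expansion
$\det(\vect{e},\vect{\kappa},\vect{\kappa}')=\epsilon\bigl(\mu\theta+\tfrac12\theta'\bigr)$,
hence $\tau=-\epsilon\bigl(\mu+\tfrac12\theta'/\theta\bigr)$, is correct. So up to the sign issue you flag, your argument is the intended one.

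The gap is in how you propose to dispose of $\epsilon$. The hope that the sign conventions ``absorb'' $\epsilon$ and yield the $\epsilon$-free formula for both values of the sign cannot work, and the reason is visible from the very facts you cite: by Lemma \ref{lem:rigid-motion}, $\mu$ (and obviously $\theta$) are unchanged under an orientation-reversing isometry $T$, while $\epsilon$ flips; on the other hand the Frenet torsion satisfies $\bar{\tau}=-\tau$ under such a $T$ (since $\bar{\vect{b}}=\det(T)\,T\vect{b}=-T\vect{b}$). So the right-hand side $-\mu-\tfrac12\theta'/\theta$ is reflection-invariant while the left-hand side $\tau$ is not, and no $\epsilon$-free identity can hold for both signs unless $\tau\equiv0$. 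The factor $\epsilon$ in your computation is therefore genuine, not a bookkeeping artifact: the identity is $\tau=-\epsilon\bigl(\mu+\tfrac12\theta'/\theta\bigr)$, and the stated $\epsilon$-free form is the case $\epsilon=+1$ (which one may always arrange by first composing with a reflection, as in the normalization $T\mathcal{F}(s_0)=E_\epsilon$ preceding Theorem \ref{thm:fundamental}). To make your proof complete you should either keep the factor $\epsilon$ in the conclusion or add the normalization $\epsilon=+1$ as a standing assumption; you cannot prove the statement as literally written for $\epsilon=-1$. (You can confirm the $\epsilon=+1$ case numerically against Example \ref{ex:Lopez-L1}, where the given $\theta$, $\mu$, $\tau$ do satisfy $\tau=-\mu-\tfrac12\theta'/\theta$.) Note that this is a criticism that applies equally to the paper's own one-line proof, which silently discards the same factor $\det(\vect{e},\vect{\kappa},\vect{\beta})=\epsilon$ from \eqref{eq:frame-relation2}; your write-up is the more careful of the two in that it at least surfaces the difficulty, but it does not resolve it.
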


\begin{proof}
Substituting 
$\gamma'=\vect{e}$, 
$\gamma''=\vect{\kappa}$, 
$\gamma'''=\vect{\kappa}'$
into 
$\det (\gamma',\gamma'',\gamma''') = -\theta \tau$
as in \eqref{eq:curve-det},
and applying \eqref{eq:Frenet-Lk},
we may verify the desired identity.
\end{proof}

\begin{theorem}
\label{thm:tau-Lk}
For a positive integer $k$,
let $\gamma(s) : I \to \L^3$
be a spacelike curve
of type $L_k$ at $s_0\in I$
parametrized by arclength.
Then 
$$
   \lim_{s\to s_0} (s-s_0) \, \tau (s) = -\frac{1}{2}k
$$
holds.
In particular,
$\tau(s)$ is unbounded at $s_0$.
\end{theorem}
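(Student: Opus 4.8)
The plan is to start from the explicit formula for the torsion established in Lemma \ref{lem:tau-Lk}, namely $\tau(s) = -\mu(s) - \tfrac12\,\theta'(s)/\theta(s)$, and to analyze the two summands separately after multiplying by $(s-s_0)$. Concretely, one writes
$$
  (s-s_0)\,\tau(s)
  = -(s-s_0)\,\mu(s)
    - \frac{1}{2}\,(s-s_0)\,\frac{\theta'(s)}{\theta(s)},
$$
so the assertion follows once I show that the $\mu$-term tends to $0$ and the logarithmic-derivative term tends to $k$.

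First I would dispose of the $\mu$-term. By Lemma \ref{lem:m-k} the pseudo-binormal vector field $\vect{\beta}(s)$ is defined and smooth on an entire neighborhood of $s_0$: its construction there divides only by $\alpha_1' - \alpha_2'\cosh\alpha_1$, which is nonvanishing by the non-zero curvature hypothesis. Hence $\mu(s) = -\inner{\vect{\kappa}'(s)}{\vect{\beta}(s)}$ is continuous, therefore bounded near $s_0$, and so $(s-s_0)\,\mu(s)\to 0$ as $s\to s_0$.

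The heart of the matter is the second term. Since $\gamma$ is of type $L_k$ at $s_0$, the causal curvature vanishes there to order exactly $k$, that is $\theta(s_0)=\cdots=\theta^{(k-1)}(s_0)=0$ while $\theta^{(k)}(s_0)\neq 0$. I would then factor $\theta(s) = (s-s_0)^k g(s)$ with $g$ smooth and $g(s_0) = \theta^{(k)}(s_0)/k! \neq 0$, which is Hadamard's lemma (equivalently, repeated Taylor expansion with integral remainder). Differentiating the factorization gives
$$
  \frac{\theta'(s)}{\theta(s)}
  = \frac{k}{s-s_0} + \frac{g'(s)}{g(s)},
$$
and because $g(s_0)\neq 0$ the ratio $g'/g$ remains bounded near $s_0$, so $(s-s_0)\,\theta'(s)/\theta(s)\to k$. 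Combining this with the vanishing of the $\mu$-term yields $\lim_{s\to s_0}(s-s_0)\,\tau(s) = -\tfrac12 k$; since $k\ge 1$, this forces $\tau$ to be unbounded at $s_0$.

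The calculation is essentially routine; the only point demanding care—and the one I would flag as the main obstacle—is verifying that $\mu$ is genuinely bounded at the curvature-lightlike point. A priori $\mu$ is expressed through $\vect{\beta}$, and in Proposition \ref{prop:Frenet-Lk} the field $\vect{\beta}$ appears via the formula $-(1/\theta)\bigl(\epsilon\,\vect{e}\times\vect{\kappa} - \vect{\kappa}\bigr)$, which is singular at $s_0$. One must instead appeal to the smooth extension guaranteed by Lemma \ref{lem:m-k} to rule out any hidden blow-up of $\mu$ that could contaminate the coefficient $-\tfrac12 k$.
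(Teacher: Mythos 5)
Your proposal is correct and follows essentially the same route as the paper: both apply Lemma \ref{lem:tau-Lk}, factor $\theta(s)=(s-s_0)^k\hat{\theta}(s)$ via the division (Hadamard) lemma, and conclude from the resulting logarithmic derivative. Your explicit verification that $\mu$ is bounded near $s_0$ (via the smooth extension in Lemma \ref{lem:m-k}) is a point the paper leaves implicit, but it is not a difference in method.
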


\begin{proof}
By the definition of type $L_k$ in Definition \ref{def:type-Lk},
the division lemma\footnote{The division lemma
is also called {\it Hadamard's lemma},
cf.\ \cite[Lemma 3.4]{BG}.} 
yields that
there exists a function $\hat{\theta}(s)$
such that 
$$
%\begin{equation}\label{eq:theta-Ak}
  \theta(s)=(s-s_0)^{k}\,\hat{\theta}(s)\qquad
  (\hat{\theta}(s_0)\neq0)
%\end{equation}
$$
holds.
%
%Substituting %\eqref{eq:theta-Ak}
%$\theta(s)=(s-s_0)^k\hat{\theta}(s)$ 
%into \eqref{eq:tau-Lk},
Together with Lemma \ref{lem:tau-Lk},
we have
$$
  \tau = -\mu - \frac1{2}\left( \frac{k}{s-s_0} 
  + \frac{\hat{\theta}'}{\hat{\theta}} \right).
$$
Hence,
$(s-s_0)\,\tau(s) \to - k/2$
holds as $s\to s_0$.
\end{proof}

We remark that, 
for a planar curve having a cusp singularity,
the behavior of curvature function at the cusp
is investigated \cite{SU}.

\begin{example}
Let $\gamma(s)$ be the spacelike curve
of type $L_1$ given in 
Example \ref{ex:Lopez-L1}.
The torsion function $\tau(s)$
can be calculated as
$$
  \tau(s)=\frac{-s^6+2 s^4+2 s^2-2}{\sqrt{s^2-1} \left(s^4-s^2-1\right)},
$$
which is unbounded at $s_0=-\sqrt{1+\sqrt{5}}/\sqrt{2}$.
We can check that 
$
\lim_{s\to s_0}(s-s_0)\tau(s)=-1/2,
$
which verifies Theorem \ref{thm:tau-Lk}.
\end{example}

%%%%%%%%%%%%%%%%%%%%%%%%%%%%%%%%%%
%%%%%%%%%%%%%%%%%%%%%%%%%%%%%%%%%%
%%%%%%%%%%%%%%%%%%%%%%%%%%%%%%%%%%
%%%%%%%%%%%%%%%%%%%%%%%%%%%%%%%%%%

As a corollary of Theorem \ref{thm:tau-Lk},
we obtain the planarity condition
for analytic spacelike curves (Corollary \ref{cor:planarity}).
For spacelike curves of type $S$, $T$ or $L$,
the planarity condition is given as follows:

\begin{fact}[{\cite[Theorem 2.3]{Lopez}, 
\cite[Corollary 3.2]{IPS}, \cite[Remark 7]{daSilva}}]
\label{fact:Lopez-planar}
Let $\gamma : I \to \L^3$ 
be a spacelike curve.
\begin{itemize}
\item 
Assume that $\gamma(s)$ is a Frenet curve.
Then, $\gamma(s)$ is included in an affine plane 
if and only if 
its torsion function is identically zero.
\item
Assume that $\gamma(s)$ is of type $L$.
Then, $\gamma(s)$ is a planar curve
included in a lightlike plane.
\end{itemize}
\end{fact}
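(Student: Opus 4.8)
First I would note that both statements reduce to exhibiting a constant vector $\vect{c}\in\L^3$ for which $s\mapsto\inner{\gamma(s)}{\vect{c}}$ is constant: $\gamma$ then lies in the affine plane $\{\vect{x} : \inner{\vect{x}}{\vect{c}}=\inner{\gamma(s_0)}{\vect{c}}\}$, whose causal character is governed by that of $\vect{c}$ (spacelike or timelike normal for a Frenet curve, lightlike normal for type $L$). Everything is to be read off the Frenet-Serret type formulas of \S\ref{sec:type-Frenet} and \S\ref{sec:type-L}.

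For the Frenet case I would prove both implications from $\vect{b}'=\sigma_{\gamma}\tau\vect{n}$. If $\tau\equiv0$, then $\vect{b}$ is constant, and since $\vect{b}=\sigma_{\gamma}\,\vect{e}\times\vect{n}$ is orthogonal to $\vect{e}=\gamma'$, the function $\inner{\gamma}{\vect{b}}$ has derivative $\inner{\vect{e}}{\vect{b}}=0$; thus $\inner{\gamma}{\vect{b}}$ is constant and $\gamma$ is planar with normal $\vect{b}$. Conversely, if $\inner{\gamma}{\vect{c}}$ is constant for some nonzero constant $\vect{c}$, then differentiating and using $\vect{\kappa}=\kappa\vect{n}$ with $\kappa>0$ gives $\inner{\vect{e}}{\vect{c}}=\inner{\vect{n}}{\vect{c}}=0$; differentiating $\inner{\vect{n}}{\vect{c}}=0$ once more and substituting $\vect{n}'=\sigma_{\gamma}(\kappa\vect{e}+\tau\vect{b})$ yields $\sigma_{\gamma}\tau\inner{\vect{b}}{\vect{c}}=0$. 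As $\mathrm{span}(\vect{e},\vect{n})$ is non-degenerate (positive definite for type $S$, Lorentzian for type $T$), its orthogonal complement is the line $\R\vect{b}$, so $\vect{c}$ is a nonzero multiple of the non-null unit vector $\vect{b}$; hence $\inner{\vect{b}}{\vect{c}}\neq0$ and $\tau\equiv0$ follows.

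For type $L$ the crucial point is that $\vect{\kappa}'=-\mu\vect{\kappa}$ pins the direction of $\vect{\kappa}$: the vector field $\vect{L}(s):=\exp\!\bigl(\int_{s_0}^{s}\mu\bigr)\vect{\kappa}(s)$ satisfies $\vect{L}'\equiv\vect{0}$, so $\vect{L}$ is a constant vector, and it is lightlike since $\vect{\kappa}$ is. Taking $\vect{L}$ as normal, I would compute $\frac{d}{ds}\inner{\gamma}{\vect{L}}=\inner{\vect{e}}{\vect{L}}$. Differentiating $\inner{\vect{e}}{\vect{e}}\equiv1$ gives $\inner{\vect{e}}{\vect{\kappa}}=0$, and because $\vect{\kappa}$ is a nonzero multiple of $\vect{L}$ this forces $\inner{\vect{e}}{\vect{L}}=0$; hence $\inner{\gamma}{\vect{L}}$ is constant and $\gamma$ lies in the plane with lightlike normal $\vect{L}$, which is therefore a lightlike plane.

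The computations are elementary once the Frenet-Serret formulas are available, so there is no deep obstacle; the step I would watch most carefully is keeping track of causal characters. In the Frenet converse one must ensure the normal $\vect{c}$ pairs non-trivially with $\vect{b}$, which is where non-degeneracy of the osculating plane enters, while in the type-$L$ case the temptation is to expect $\inner{\gamma}{\vect{L}}$ to grow linearly via $\inner{\vect{e}}{\vect{L}}$; recognizing that $\vect{e}\perp\vect{\kappa}$ makes this first-order term vanish identically, and recognizing that $\vect{L}$ is lightlike is what makes the ambient plane lightlike rather than spacelike or timelike.
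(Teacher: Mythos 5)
Your proof is correct, but there is nothing in the paper to compare it against: the statement is imported as a \emph{Fact}, with the proof deferred entirely to the cited references (L\'opez, Izumiya--Pei--Sano, da Silva), so the paper itself contains no argument for it. What you have produced is a self-contained derivation using only the Frenet--Serret type formulas of \S\ref{sec:type-Frenet} and \S\ref{sec:type-L}, which is exactly the standard route and fills the gap the paper leaves to the literature. Both halves check out: in the Frenet case the forward direction ($\tau\equiv0$ gives constant $\vect{b}$, hence $\inner{\gamma}{\vect{b}}$ constant) is immediate, and in the converse you correctly identify the one subtle point, namely that $\vect{c}\perp\vect{e},\vect{n}$ forces $\vect{c}\in\R\vect{b}$ because the osculating plane ${\rm span}(\vect{e},\vect{n})$ is non-degenerate (positive definite for type $S$, Lorentzian for type $T$) and $\vect{b}$ is non-null, so $\inner{\vect{b}}{\vect{c}}\neq0$ and $\tau\equiv0$ follows; this is precisely where a naive Euclidean argument could silently fail in $\L^3$. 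In the type $L$ case the integrating factor $\vect{L}(s)=\exp\bigl(\int_{s_0}^{s}\mu\bigr)\vect{\kappa}(s)$, killed by $\vect{\kappa}'=-\mu\vect{\kappa}$, is the right trick: it produces a constant lightlike normal, and $\inner{\vect{e}}{\vect{\kappa}}=0$ (from $\inner{\vect{e}}{\vect{e}}\equiv1$) makes $\inner{\gamma}{\vect{L}}$ constant, so the curve lies in a degenerate (lightlike) plane. The only cosmetic remark is that your opening claim that every affine plane can be written as $\{\vect{x}:\inner{\vect{x}}{\vect{c}}=d\}$ with $\vect{c}\neq\vect{0}$ deserves the one-line justification that the Lorentz form is non-degenerate, but this is standard and does not affect correctness.
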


So, it is natural to ask 
the planarity condition
for general spacelike curves.
In analytic case, we have the following:

\begin{corollary}\label{cor:planarity}
A real analytic spacelike planar curve 
with non-zero curvature vector 
must be either
a spacelike Frenet curve with vanishing torsion or 
a spacelike curve of type $L$.
%In particular, 
%any spacelike curve of type $L_k$ never be planar.
\end{corollary}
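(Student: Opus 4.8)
The plan is to use real analyticity to control the zero set of the causal curvature function $\theta(s)=\inner{\vect{\kappa}(s)}{\vect{\kappa}(s)}$, and then to play the blow-up of the torsion at a type $L_k$ point (Theorem~\ref{thm:tau-Lk}) against the classical planarity criterion (Fact~\ref{fact:Lopez-planar}). Since $\gamma$ is real analytic, $\theta$ is a real analytic function on the interval $I$, so by the identity theorem either $\theta\equiv0$ on $I$ or the zeros of $\theta$ are isolated. In the first case $\gamma$ is, by definition, a spacelike curve of type $L$, which is one of the two permitted conclusions; so the entire argument reduces to the second case.

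Assume then that $\theta\not\equiv0$, so that the zeros of $\theta$ are isolated, and suppose for contradiction that $\theta$ vanishes at some $s_0\in I$. Because $\theta$ is analytic and not identically zero, there is a smallest positive integer $k$ with $\theta(s_0)=\cdots=\theta^{(k-1)}(s_0)=0$ and $\theta^{(k)}(s_0)\neq0$; that is, $\gamma$ is of type $L_k$ at $s_0$ (this recovers the remark following Definition~\ref{def:type-Lk}). Theorem~\ref{thm:tau-Lk} then gives $\lim_{s\to s_0}(s-s_0)\,\tau(s)=-k/2\neq0$, so $\tau$ is unbounded as $s\to s_0$. On the other hand, on each connected component of a small punctured neighborhood $J\setminus\{s_0\}$ the function $\theta$ is nowhere zero, so $\gamma$ restricted there is a spacelike Frenet curve; as a sub-arc of a planar curve it lies in an affine plane, whence the Frenet case of Fact~\ref{fact:Lopez-planar} forces $\tau\equiv0$ on $J\setminus\{s_0\}$. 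This contradicts the unboundedness of $\tau$, so $\theta$ has no zeros at all.

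With $\theta$ nowhere vanishing on the connected interval $I$, the curve $\gamma$ is a spacelike Frenet curve (of type $S$ or $T$ according to the constant sign of $\theta$). Applying the Frenet case of Fact~\ref{fact:Lopez-planar} to the planar Frenet curve $\gamma$ yields $\tau\equiv0$. Combining the two cases, $\gamma$ is either of type $L$ or a spacelike Frenet curve with vanishing torsion, which is the assertion.

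The crux is the contradiction in the middle paragraph: planarity of every sub-arc forces $\tau\equiv0$ on the Frenet region, while Theorem~\ref{thm:tau-Lk} forces $\tau$ to blow up like $-k/(2(s-s_0))$ there, so a planar curve simply cannot possess a curvature-lightlike point. Real analyticity is essential at exactly this point, both to guarantee that the zeros of $\theta$ are isolated (otherwise $\theta$ could vanish to infinite order at a point, and no finite type $L_k$ would be available) and to make the clean dichotomy ``$\theta\equiv0$ or isolated zeros'' possible in the first place.
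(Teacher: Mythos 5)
Your proof is correct and follows essentially the same route as the paper: reduce via the analytic dichotomy ($\theta\equiv0$ or isolated finite-order zeros) to the type $L_k$ case, then contradict the vanishing of $\tau$ on the planar Frenet sub-arcs (Fact \ref{fact:Lopez-planar}) with the unboundedness of $\tau$ from Theorem \ref{thm:tau-Lk}. The only difference is that you spell out the identity-theorem argument behind the classification into types $S$, $T$, $L$, $L_k$, which the paper states without proof.
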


\begin{proof}
Let $\gamma(s) : I \to \L^3$ 
be a real analytic spacelike curve with non-zero curvature vector.
By the analyticity,
$\gamma(s)$ must be of type $S$, $T$, $L$ or $L_k$.
By Fact \ref{fact:Lopez-planar},
it suffices to show that 
if $\gamma(s)$ is of type $L_k$,
then $\gamma(s)$ never be planar.
Assume that $\gamma(s)$ is of type $L_k$ at $s_0\in I$,
and that $\gamma(s)$ is planar.
Since $\gamma(s)$ $(s\ne s_0)$ is a Frenet curve
which is included in an affine plane,
the torsion $\tau(s)$ of $\gamma(s)$ $(s\ne s_0)$ is
identically zero.
However, by Theorem \ref{thm:tau-Lk},
$\tau(s)$ is unbounded at $s=s_0$,
which is a contradiction.
Hence, any spacelike curve of type $L_k$ never be planar.
\end{proof}

%%%%%%%%%%%%%%%%%%%%%%%%%%%%%%%%%%
%%%%%%%%%%%%%%%%%%%%%%%%%%%%%%%%%%
%%%%%%%%%%%%%%%%%%%%%%%%%%%%%%%%%%
%%%%%%%%%%%%%%%%%%%%%%%%%%%%%%%%%%
\begin{acknowledgements}
The author expresses his gratitude to 
Junichi Inoguchi for helpful comments.
He also would like to thank 
Shyuichi Izumiya and Luiz C.\ B.\ da Silva
for giving him several informations on
Fact \ref{fact:Lopez-planar}.
\end{acknowledgements}

%%%%%%%%%%%%%%%%%%%%%%%%%%%%%%%%%%
%%%%%%%%%%%%%%%%%%%%%%%%%%%%%%%%%%
%%%%%%%%%%%%%%%%%%%%%%%%%%%%%%%%%%
%%%%%%%%%%%%%%%%%%%%%%%%%%%%%%%%%%

\end{document}